\titleformat{\section}[runin]{\bfseries\filcenter}{\thesection}{1em}{}
\renewcommand{\thesection}{\arabic{section}}
\title{\large \bf On automorphisms of finite $p$-groups}
\author{\small \bf Hemant Kalra and Deepak Gumber   \\
\small \em School of Mathematics \\
\small \em Thapar Institute of Engineering and Technology, Patiala - 147 004,
India\\
\small \em emails: happykalra26@gmail.com, dkgumber@gmail.com\\
 }
\date{}
\newtheorem{thm}{Theorem}[section]
\newtheorem{lm}[thm]{Lemma}
\newtheorem{cor}[thm]{Corollary}
\newtheorem{rem}[thm]{Remark}
\newtheorem{expl}[thm]{Example}
\DeclareMathOperator{\Aut}{Aut}
\DeclareMathOperator{\inn}{Inn}
\DeclareMathOperator{\Hom}{Hom}
\DeclareMathOperator{\cl}{cl}
\begin{document}

\maketitle
\begin{abstract}
It is proved in [J. Group Theory, {\bf 10} (2007), 859-866] that if $G$ is a  finite $p$-group such that $(G,Z(G))$ is a Camina pair, then $|G|$ divides $|\Aut(G)|$. We give a very  short and elementary proof of this result.
\end{abstract}
\vspace{2ex}

\noindent {\bf 2010 Mathematics Subject Classification:} 20D15,
20D45.

\vspace{2ex}

\noindent {\bf Keywords:} Camina pair, class-preserving automorphism.

\section{Introduction}

Let $G$ be a finite non-abelian $p$-group. The problem ``Does the order, if it is greater than $p^2$, of a finite non-cyclic $p$-group divide the order of its automorphism group?'' is a well-known problem \cite[Problem 12.77]{maz} in finite group theory. Gasch$\ddot{\mbox{u}}$tz \cite{gas} proved that any finite $p$-group of order at least $p^2$ admits a non-inner automorphism of order a power of $p$. It follows that the problem has an affirmative answer for finite $p$-groups with center of order $p$. This immediately answers the problem positively for  finite $p$-groups of maximal class. Otto \cite{ott} also gave an independent proof of this result. Fouladi et al. \cite{fou} gave a supportive answer to the problem for finite $p$-groups of co-class 2. For more details on this problem, one can see the introduction in the paper of Yadav \cite{yad1}. In \cite[Theorem A]{yad1}, Yadav proved that if $G$ is a finite $p$-group such that $(G,Z(G))$ is a Camina pair, then $|G|$ divides $|\Aut(G)|$.  He also proved the important result \cite[Corollary 4.4]{yad1} that the group of all class-preserving outer automorphisms is non-trivial for finite $p$-groups $G$ with $(G,Z(G))$  a Camina pair.

In this paper, we give different and very short proofs of these results of Yadav using elementary arguments.

Let $G$ be a finite $p$-group. Then $(G,Z(G))$ is called a Camina pair if $xZ(G) \subseteq x^G$ for all $x\in G-Z(G)$, where $x^G$ denotes the conjugacy class of $x$ in $G$. In particular, if $(G,G')$ is a Camina pair, then $G$ is called a Camina $p$-group.

\section{Proofs}

We shall need the following lemma which is a simple modification of a lemma of Alperin \cite[Lemma 3]{alp}.

\begin{lm}
Let $G$ be any group and $B$ be a central subgroup of $G$ contained in a normal subgroup $A$ of $G$. Then the group $\Aut_{A}^{B}(G)$of all automorphisms of $G$ that induce the identity on both $A$ and $G/B$ is isomorphic onto
$\mathrm{Hom}(G/A,B)$.
\end{lm}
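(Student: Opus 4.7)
The plan is to build an explicit isomorphism $\Phi\colon \Aut_{A}^{B}(G)\to \Hom(G/A,B)$ by sending an automorphism to the ``difference from the identity'' map. Concretely, for $\phi\in \Aut_{A}^{B}(G)$, I would define
\[
  f_\phi\colon G\to G,\qquad f_\phi(g)=g^{-1}\phi(g),
\]
and show that $f_\phi$ factors through $G/A$ to give an element of $\Hom(G/A,B)$. First, since $\phi$ induces the identity on $G/B$, we have $\phi(g)B=gB$, hence $f_\phi(g)\in B$. Second, since $B$ is central, the standard one-line check
\[
 f_\phi(gh)=h^{-1}g^{-1}\phi(g)\phi(h)=h^{-1}\phi(h)\,g^{-1}\phi(g)=f_\phi(g)f_\phi(h)
\]
shows $f_\phi\colon G\to B$ is a homomorphism. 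Third, since $\phi|_{A}=\mathrm{id}$, $f_\phi$ vanishes on $A$, so it descends to $\bar f_\phi\colon G/A\to B$. This gives the map $\Phi$.

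Next I would verify that $\Phi$ is a homomorphism of groups. Since $B\subseteq A$ and $\phi$ fixes $A$ pointwise, for any $\phi,\psi\in \Aut_{A}^{B}(G)$,
\[
  (\phi\psi)(g)=\phi(g\, f_\psi(g))=\phi(g)f_\psi(g)=g\,f_\phi(g)f_\psi(g),
\]
so $f_{\phi\psi}=f_\phi f_\psi$, as required.

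To show $\Phi$ is bijective, I would exhibit an inverse. Given $f\in\Hom(G/A,B)$, define $\phi_f(g)=g\,f(gA)$. The calculation $\phi_f(gh)=gh\,f(gA)f(hA)=g\,f(gA)\,h\,f(hA)=\phi_f(g)\phi_f(h)$ (using centrality of $B$) shows $\phi_f$ is an endomorphism; it is the identity on $A$ (since $f(A)=1$) and induces the identity on $G/B$ (since $f(gA)\in B$). For bijectivity, if $\phi_f(g)=1$ then $g=f(gA)^{-1}\in B\subseteq A$, whence $f(gA)=1$ and $g=1$; for surjectivity, given $x\in G$ the element $g=x\,f(xA)^{-1}$ satisfies $gA=xA$ and $\phi_f(g)=x$. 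Thus $\phi_f\in\Aut_{A}^{B}(G)$, and the identities $f_{\phi_f}=f$ and $\phi_{f_\phi}=\phi$ are immediate from the definitions.

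Nothing in this outline is deep; the argument is an exercise in bookkeeping once centrality of $B$ is used correctly. The only mild point to get right is that the verification of the group-homomorphism property of $\Phi$ uses both hypotheses simultaneously — centrality of $B$ (so that $f_\psi(g)$ commutes with $\phi(g)$) and $B\subseteq A$ with $\phi|_{A}=\mathrm{id}$ (so that $\phi$ fixes $f_\psi(g)$). This is the step I would write out with the most care.
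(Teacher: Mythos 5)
Your proof is correct and complete; the map $\phi\mapsto\bigl(gA\mapsto g^{-1}\phi(g)\bigr)$ with inverse $f\mapsto\bigl(g\mapsto g\,f(gA)\bigr)$ is exactly the standard argument behind Alperin's lemma, which the paper itself does not reprove but simply cites. The two hypotheses you flag (centrality of $B$ and $B\le A$ with $\phi|_{A}=\mathrm{id}$) are indeed the only points where care is needed, and you use them correctly.
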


\begin{thm}
Let $G$ be a finite $p$-group such that $(G,Z(G))$ is a Camina pair. Then $|G|$ divides $|\Aut(G)|.$
\end{thm}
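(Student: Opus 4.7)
The plan is to use the lemma to manufacture a subgroup of $\Aut(G)$ which, together with $\inn(G)$, has order divisible by $|G|$. First I would extract the main structural consequence of the Camina hypothesis: for every non-central $x$ and every $z \in Z(G)$, the inclusion $xZ(G) \subseteq x^G$ yields $g \in G$ with $g^{-1}xg = xz$, i.e.\ $z = [x, g]$, so $Z(G) \subseteq G'$.

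Next I would apply the lemma with $A = B = Z(G)$, producing the subgroup $K := \Aut_{Z(G)}^{Z(G)}(G) \cong \Hom(G/Z(G), Z(G)) = \Hom(G/G', Z(G))$ of $\Aut(G)$ (the last equality uses $Z(G) \subseteq G'$). Every $\phi \in K$ is class-preserving: $\phi$ fixes $Z(G)$ pointwise, and for $x \notin Z(G)$ it sends $x$ into $xZ(G) \subseteq x^G$ by the Camina property. The intersection $K \cap \inn(G)$ consists of the inner automorphisms $i_g$ with $g \in Z_2(G)$ (the condition that $i_g$ act trivially on $G/Z(G)$), so $K \cap \inn(G) \cong Z_2(G)/Z(G)$.

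Combining, $K\cdot\inn(G) \leq \Aut(G)$ has order
\[
|K \cdot \inn(G)| \;=\; \frac{|\Hom(G/G', Z(G))| \cdot |G|}{|Z_2(G)|},
\]
so the theorem will follow once I establish $|Z_2(G)| \mid |\Hom(G/G', Z(G))|$. There is a canonical injection $Z_2(G)/Z(G) \hookrightarrow \Hom(G/G', Z(G))$ given by $gZ(G) \mapsto (xG' \mapsto [x, g])$, which is well-defined and injective because $g \in Z_2(G)$ forces $[x, g] \in Z(G)$. This accounts for a factor of $|Z_2(G)/Z(G)|$ on the right.

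The hard part will be supplying the remaining factor of $|Z(G)|$ inside $\Hom(G/G', Z(G))$ independently of the commutator-embedded copy of $Z_2(G)/Z(G)$. I expect this to be squeezed out of the strong duality the Camina hypothesis imposes between $G/G'$ and $Z(G)$: since every $z \in Z(G)$ is realised as a commutator $[x_0, g]$ for any fixed non-central $x_0$, one obtains surjections $G \to Z(G)$ that factor through $G/G'$, producing homomorphisms $G/G' \to Z(G)$ beyond those in the image of the commutator map. Should this central-automorphism count turn out to be tight in a small corner case (for instance for extraspecial groups of order $p^3$, where $K$ is entirely contained in $\inn(G)$), the plan would be supplemented by feeding in the contribution of automorphisms acting nontrivially on $Z(G)$ via the natural homomorphism $\Aut(G) \to \Aut(Z(G))$, which provides the missing divisor.
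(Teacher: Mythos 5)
There is a genuine gap here, and the route you have chosen cannot be completed as outlined. Everything is fine up to the point where you must show $|Z_2(G)|$ divides $|\Hom(G/G',Z(G))|$; that divisibility is simply false for a whole family of groups satisfying the hypothesis. Every extraspecial group $G$ of order $p^{2n+1}$ has $(G,Z(G))$ a Camina pair (the commutator form on $G/Z(G)$ is nondegenerate, so $[x,G]=Z(G)$ and $x^G=xZ(G)$ for every non-central $x$), yet $G$ has class $2$, so $Z_2(G)=G$ has order $p^{2n+1}$ while $\Hom(G/G',Z(G))\cong\Hom((\mathbb{Z}/p)^{2n},\mathbb{Z}/p)$ has order $p^{2n}$. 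Concretely, your $K=\Aut_{Z(G)}^{Z(G)}(G)$ then satisfies $K\cap\inn(G)\cong Z_2(G)/Z(G)=G/Z(G)$, which has the same order as $K$, so $K\subseteq\inn(G)$ and $K\cdot\inn(G)=\inn(G)$ has order $p^{2n}<|G|$. This is not a ``small corner case'' confined to order $p^3$. Moreover, your proposed rescue via the homomorphism $\Aut(G)\to\Aut(Z(G))$ cannot supply the missing factor: the missing divisor is a power of $p$, whereas $\Aut(Z(G))\cong\mathbb{Z}/(p-1)$ when $|Z(G)|=p$, of order coprime to $p$. Finally, even away from these examples, the ``remaining factor of $|Z(G)|$ inside $\Hom(G/G',Z(G))$'' is never actually constructed in your argument; it is only conjectured from a duality heuristic.

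The paper avoids this obstruction by a case split together with a different choice of $A$ in the lemma. If some maximal subgroup $M_1$ has $Z(M_1)>Z(G)$, then for $g\in Z(M_1)\setminus Z(G)$ the Camina condition forces $|Z(G)|\le|[g,G]|=|[g,\langle g_1\rangle]|\le p$, so $|Z(G)|=p$ and the conclusion follows from Gasch\"{u}tz's theorem --- this is precisely the case that swallows the extraspecial groups. Otherwise $Z(M)=Z(G)$ for every maximal subgroup $M$, whence $C_G(M)=Z(M)=Z(G)$, and the lemma is applied with $A=M$ maximal and $B=Z(G)$ (not $A=B=Z(G)$): the group $\Aut_M^{Z(G)}(G)\cong\Hom(G/M,Z(G))$ has order exactly $|Z(G)|$ because $Z(G)$ is elementary abelian, and it meets $\inn(G)$ trivially since an inner automorphism fixing $M$ pointwise is induced by an element of $C_G(M)=Z(G)$. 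That choice of $A$ is exactly what your version lacks: it shrinks the intersection with $\inn(G)$ to the identity instead of letting it grow to $Z_2(G)/Z(G)$. To repair your argument you would need either to import the $|Z(G)|=p$ case via Gasch\"{u}tz or to produce a genuinely new supply of automorphisms outside $K\cdot\inn(G)$.
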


\begin{proof}  Observe that  $Z(G)\le G'\le \Phi(G)$  and, therefore,  $Z(G)\le Z(M)$ for every maximal subgroup $M$ of $G$. Suppose that $Z(G)<Z(M_1)$ for some  maximal subgroup $M_1$ of $G$.  Let $G=M_1\langle g_1\rangle $, where $g_1\in G-M_1$ and $g_{1}^{p}\in M_1$. Let $g\in Z(M_1)-Z(G)$. Then  
\[|Z(G)|\le |[g, G]|= |[g, M_1\langle g_1\rangle]|=|[g,\langle g_1\rangle]|\le p\] 
implies that $|Z(G)|=p.$ The result therefore follows by Gasch$\ddot{\mbox{u}}$tz \cite{gas}. We therefore suppose that $Z(G)=Z(M)$ for every maximal subgroup $M$ of $G$. We prove that $C_G(M)\le M$. Assume that there exists $g_0\in C_G(M_0)-M_0$ for some maximal subgroup $M_0$ of $G$. Then $G=M_0\langle g_0\rangle$ and thus $g_0\in Z(G)$, because $g_0$ commutes with $M_0$. This is a contradiction because $Z(G)\le \Phi(G)$. Therefore $C_G(M)\le M$ for every maximal subgroup $M$ of $G$. Consider the group $\Aut_{M}^{Z(G)}(G)$ which  is isomorphic to $\Hom(G/M,Z(G))$ by Lemma 2.1. It follows that $\Aut_{M}^{Z(G)}(G)$ is non-trivial. Let $\alpha\in \Aut_{M}^{Z(G)}(G)\cap (\inn(G))$. Then $\alpha$ is an inner automorphism induced by some $g\in C_G(M)=Z(M)$. Since $Z(G)=Z(M)$,  $\alpha$ is trivial. It follows that 
$$|(\Aut_{M}^{Z(G)}(G))(\inn(G))|=|(\Aut_{M}^{Z(G)}(G))||(\inn(G))|=|Z(G)||G/Z(G)|=|G|,$$
because $Z(G)$ is elementary abelian by  Theorem 2.2 of  \cite{mac}. This completes the proof.
\end{proof}

\begin{cor}
Let $G$ be a finite Camina $p$-group. Then $|G|$ divides $|\Aut(G)|$. 
\end{cor}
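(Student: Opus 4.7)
My plan is to deduce this corollary directly from Theorem 2.2 by showing that a Camina $p$-group $G$ automatically satisfies the stronger condition that $(G, Z(G))$ is itself a Camina pair. As a first step I would record the elementary observation that $Z(G) \leq G'$ in any non-abelian Camina $p$-group: if some $z \in Z(G)$ lay outside $G'$, then the Camina hypothesis applied to $z$ would give $zG' \subseteq z^G = \{z\}$, forcing $G' = 1$ and contradicting non-abelianness.

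With this containment in hand, I would verify $xZ(G) \subseteq x^G$ for every $x \in G - Z(G)$ by splitting into two subcases. For $x \in G - G'$, the Camina hypothesis together with $Z(G) \leq G'$ immediately yields $xZ(G) \subseteq xG' \subseteq x^G$. The remaining and harder subcase is $x \in G' - Z(G)$, where the required inclusion is equivalent to $Z(G) \subseteq [G, x]$.

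This last subcase is the main obstacle, and I would handle it by invoking Dark and Scoppola's theorem that a finite Camina $p$-group has nilpotency class at most $3$. If the class is $\leq 2$ then $G' = Z(G)$ and the subcase is vacuous. If the class is exactly $3$, then $[G, G'] = Z(G)$, so $[G, x] \leq Z(G)$ holds automatically and what is needed is the reverse inclusion $[G, x] = Z(G)$ for every $x \in G' - Z(G)$, a known structural property of class-$3$ Camina $p$-groups that one either cites or derives from the non-degeneracy of the commutator map on the lower central quotients. With $(G, Z(G))$ now shown to be a Camina pair, Theorem 2.2 immediately delivers that $|G|$ divides $|\Aut(G)|$.
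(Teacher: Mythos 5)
Your proposal is correct and follows essentially the same route as the paper: both reduce the corollary to Theorem 2.2 by showing that a Camina $p$-group satisfies the stronger condition that $(G,Z(G))$ is a Camina pair, invoking Dark--Scoppola for nilpotence class at most $3$ and then relying on known structural results (Macdonald's) for the one genuinely hard step, namely $Z(G)\subseteq\{[x,g]:g\in G\}$ for $x\in G'-Z(G)$ in class $3$. You merely unpack the easy reductions ($Z(G)\le G'$ and the case $x\in G-G'$) that the paper leaves inside the citation.
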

\begin{proof} It is  a well known result \cite{dar} that nilpotence class of $G$ is at most 3. Also, it follows from \cite[Lemma 2.1, Theorem 5.2, Corollary 5.3]{mac} that $(G,Z(G))$ is a Camina pair. The result therefore follows from Theorem 2.2.
\end{proof}

An automorphism $\alpha$ of $G$ is called a class-preserving  automorphism of $G$ if $\alpha(x)\in x^G$ for each $x\in G$. The group of all class-preserving automorphisms of $G$ is denoted by $\Aut_c(G)$.  An automophism $\beta$ of $G$ is called a central automorphism if $x^{-1}\beta(x)\in Z(G)$ for each $x\in G$. It is easy to see that if $(G,Z(G))$ is a Camina pair, then the group of all central automorphisms fixing $Z(G)$ element-wise is contained in $\Aut_c(G)$.

\begin{rem}
{\em It follows from the proof of Theorem 2.2 that if $G$ is a finite $p$-group such that $(G,Z(G))$ is a Camina pair and $|Z(G)|\ge p^2$, then} 
$$|\Aut_c(G)|\ge|(\Aut_{M}^{Z(G)}(G))(\inn(G))|=|G|.$$
\end{rem}

Thus, in particular, we obtain the following result of Yadav \cite{yad1}.

\begin{cor}[{\cite[Corollary 4.4]{yad1}}]
Let $G$ be a finite $p$-group such that $(G, Z(G))$ is a Camina pair and $|Z(G)|\ge p^2$. Then $\Aut_c(G)/\inn(G)$ is non-trivial.
\end{cor}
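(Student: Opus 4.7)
The plan is to derive this corollary as a direct consequence of Remark 2.4, via an index comparison. Since every inner automorphism is obviously class-preserving, $\inn(G)\le \Aut_c(G)$, and so the quotient $\Aut_c(G)/\inn(G)$ is a well-defined group of order $[\Aut_c(G):\inn(G)]$.

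By Remark 2.4, the hypothesis $|Z(G)|\ge p^2$ yields $|\Aut_c(G)|\ge |G|$. Combining this with the identity $|\inn(G)|=|G/Z(G)|=|G|/|Z(G)|$ gives
\[
[\Aut_c(G):\inn(G)] \;=\; \frac{|\Aut_c(G)|}{|\inn(G)|} \;\ge\; \frac{|G|}{|G|/|Z(G)|} \;=\; |Z(G)| \;\ge\; p^2,
\]
and hence $\Aut_c(G)/\inn(G)$ is non-trivial (in fact of order at least $p^2$).

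The only step requiring genuine justification, and hence the main obstacle, is the invocation of Remark 2.4, whose content is that the subgroup $(\Aut_{M}^{Z(G)}(G))(\inn(G))$ produced inside the proof of Theorem 2.2 actually lies in $\Aut_c(G)$. To see this I would note that any $\alpha\in \Aut_{M}^{Z(G)}(G)$ induces the identity on $G/Z(G)$, so $x^{-1}\alpha(x)\in Z(G)$ for all $x\in G$, i.e.\ $\alpha$ is a central automorphism; moreover $\alpha$ fixes $M$, and therefore $Z(G)\le M$, pointwise. By the observation recorded in the paragraph preceding Remark 2.4, every central automorphism fixing $Z(G)$ elementwise is class-preserving when $(G,Z(G))$ is a Camina pair, so $\Aut_{M}^{Z(G)}(G)\subseteq \Aut_c(G)$. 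Together with $\inn(G)\le \Aut_c(G)$, this yields the required containment, and the order formula $|(\Aut_{M}^{Z(G)}(G))(\inn(G))|=|G|$ from the proof of Theorem 2.2 is valid precisely because the hypothesis $|Z(G)|\ge p^2$ rules out the exceptional branch $|Z(G)|=p$ of the dichotomy used there.
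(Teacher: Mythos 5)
Your proof is correct and follows essentially the same route as the paper: the corollary is deduced from Remark 2.4 by comparing $|\Aut_c(G)|\ge |G|$ with $|\inn(G)|=|G|/|Z(G)|$, and your justification of Remark 2.4 (that $\Aut_{M}^{Z(G)}(G)$ consists of central automorphisms fixing $Z(G)$ pointwise, hence class-preserving for a Camina pair, with the hypothesis $|Z(G)|\ge p^2$ forcing the branch $Z(G)=Z(M)$ for every maximal subgroup $M$) supplies exactly the details the paper leaves implicit. Nothing further is needed.
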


The following example shows that Remark 2.4 is not true  if $|Z(G)|=p$.

\begin{expl}
{\em Consider a finite $p$-group $G$ of nilpotence class 2 such that $(G,Z(G))$ is a Camina pair and $|Z(G)|=p$.  Since $\cl(G)=2$, $\exp(G/Z(G))=\exp(G')$ and  hence $G'=Z(G)=\Phi(G)$. Let $|G|=p^n$, where $n\ge 3$, and let $\lbrace x_1, x_2, \ldots, x_{n-1}\rbrace$ be the minimal generating set of $G$.
Then }
$$|\Aut_c(G)|\le \prod_{i=1}^{n-1} |x{_i}^G|=p^{n-1}=|G/Z(G)|.$$
\end{expl}

\noindent {\bf Acknowledgment}:
Research of first author is supported by Thapar Institute of Engineering and Technology and also by SERB, DST grant no. MTR/2017/000581. Research of second author is supported by  SERB, DST grant no. EMR/2016/000019.

\

\end{document}